\newtheorem{theorem}{Theorem}[section]
\theoremstyle{definition}
\newtheorem{definition}[theorem]{Definition}
\numberwithin{equation}{section}
\newcommand{\blankbox}[5]
\begin{document}
\title{Sharp Weak Bounds for $p$-adic Hardy operators on $p$-adic Linear Spaces}
\author{Amjad Hussain$^{1,*}$}
\author{Naqash Sarfraz$^1$}
\author{Ferit G\"{u}rb\"{u}z$^2$}
\subjclass[2010]{42B35, 26D15, 46B25, 47G10}
%\date{February, 2013}
\footnote{$^{}$Department of Mathematics, Quaid-I-Azam University 45320, Islamabad 44000, Pakistan\\
$^*$Corresponding Author: ahabbasi123@yahoo.com \\
$^2$Hakkari University, Faculty of Education, Department of Mathematics Education, Hakkari 30000, Turkey}

\keywords{$p$-adic fractional Hardy operator; Boundedness; $p$-adic Weak type spaces; Sharp bounds}

\begin{abstract} In this article, we establish sharp weak endpoint estimates for $p$-adic fractional Hardy operator. In addition, sharp weak bounds for $p$-adic Hardy operator on $p$-adic central Morrey space are also obtained.
\end{abstract}

\maketitle

\section{\textbf{Introduction}}

Let $p$ be a prime number, if a non-zero rational number $x$ can be written in the form
$$x=\frac{s}{t}p^{k},$$ where the integer $k=k(x)\in \mathbb Z$  and $s,t \in\mathbb Z$ are not multiples of $p,$ then the function:
$$|\cdot|_{p}:\mathbb Q\setminus \{0\}\rightarrow \mathbb R,$$
defined as:
$$|0|_{p}=0,\quad |x|_{p}=p^{-k},$$
satisfies all the axioms of a field norm with an additional property that:
\begin{equation}\label{NE}|x+y|_p\le\max\{|x|_p,|y|_p\},\end{equation}
and is commonly known as $p$-adic norm.

Likewise, the completion of field of rational numbers  $\mathbb{Q}$ with respect to ultrametric $p$-adic norm $|\cdot|_{p}$ is denoted by $\mathbb Q_p$ and is known as the field of $p$-adic numbers.
In expanded form, a $p$-adic number $x\neq 0$ can be written as (see \cite{VVZ}):
 \begin{equation}\label{EE3} x=p^k(\alpha_{0}+\alpha_{1}p+\alpha_{1}p^{2}+... )\end{equation}
 where $\alpha_i,k\in\mathbb Z,\alpha_0\ne0,\alpha_{i}\in\{0,1,2,...,p-1\},  i=1,2,\cdot\cdot\cdot.$ The series (\ref{EE3}) is convergent with respect to $p$-adic norm, since $|p^k\alpha_{i}p^{i}|_{p}\le p^{-k-i}.$

The higher dimensional space $\mathbb Q_p^n$ represents the vector space over $\mathbb Q_p$ which consists of all points $\mathbf{x}=(x_{1},x_{2},...,x_{n}),$ where $x_i\in\mathbb Q_p,i=1,2,...,n.$  The $p$-adic norm on the very space $\mathbb Q_p^n$ is:
  \begin{equation*}|\mathbf{x}|_{p}=\max_{1\leq j \leq n}|x_{j}|_{p}.\end{equation*}

In non-Archimedean geometry, the ball and and its boundary are defined, respectively, as:
$$B_{k}(\mathbf{a})=\{\mathbf{x} \in \mathbb{Q}_p^n:|\mathbf{x}-\mathbf{a}|_{p}\leq p^{k}\}, \
S_{k}(\mathbf{a})=\{\mathbf{x} \in \mathbb{Q}_p^n:|\mathbf{x}-\mathbf{a}|_{p}=p^{k}\}.$$
We denote $B_k(\mathbf{0})=B_k$ and $\ S_k(\mathbf{0})=S_k$ for convenience. Additionally, for every $\mathbf{a}_0\in\mathbb Q_p^n, \ \mathbf{a}_0+B_k=B_k(\mathbf{a}_0)$ and $\mathbf{a}_0+S_k=S_k(\mathbf{a}_0)$.

The local compactness and commutativity of the group $\mathbb{Q}_p^n$ under addition implies the existence of Haar measure $d\mathbf{x}$ on $\mathbb{Q}_p^n$, such that
$$\int_{B_{0}}d\mathbf{x}=|B_{0}|=1,$$
where the notation $|B|$ refers to the Haar measure of a measurable subset $B$ of $\mathbb{Q}_p^n.$ Also, it is not difficult to show that $|B_{k}(\mathbf a)|=p^{nk}$, $|S_{k}(\mathbf a)|=p^{nk}(1-p^{-n}),$ for any $\mathbf{a}\in\mathbb{Q}_p^n.$

Let $w(\mathbf{x})$ be a nonnegative locally integrable function on $\mathbb{Q}_p^n$ and and $w(E)$ the weighted measure of measurable subset $E\subset\mathbb{Q}_p^n,$ that is $w(E)=\int_Ew(x)dx$ respectively. The space of all complex-valued functions $f$ with norm conditions:
$$\|f\|_{L^{q}(w;\mathbb{Q}_{p}^{n})}=\bigg(\int_{\mathbb{Q}_{p}^{n}}|f(\mathbf{x})|^{p}w(\mathbf{x})d\mathbf{x}\bigg)^{1/q}<\infty,$$
is denoted by $L^q(w,\mathbb{Q}_p^n),(0<q<\infty),$ and is known as weighted Lebesgue space. Note that $L^{q}(1,\mathbb{Q}_{p}^{n})=L^{q}(\mathbb{Q}_{p}^{n}).$

We define the weighted $p$-adic weak Lebesgue space $L^{q,\infty}(w;\mathbb{Q}_{p}^{n})$ as the set of all measurable functions $f$ satisfying:
$$\|f\|_{L^{q,\infty}(w,\mathbb{Q}_{p}^{n})}=\sup_{\lambda>0}\lambda w\bigg(\{\mathbf{x}\in\mathbb{Q}_{p}^{n}:|f(\mathbf{x})|>\lambda\}\bigg)^{1/q}<\infty.$$
When $w=1,$ we get the weak Lebesgue space $L^{q,\infty}(\mathbb{Q}_{p}^{n})$ defined in  \cite{S1}. Next, we give the definition of strong and weak $p$-adic central Morrey spaces, respectively.
\begin{definition}\cite{WMF} Let $1\leq q<\infty$ and $-1/q\leq\lambda<0.$ A function $f\in L^{p}_{loc}(\mathbb{Q}_p^n)$ is said to belong to central Morrey space if:
\begin{eqnarray*}\begin{aligned}\|f\|_{\dot{B}^{q,\lambda}(\mathbb{Q}_p^n)}=\sup_{\gamma\in\mathbb{Z}}\bigg(\frac{1}{|B_{\gamma}|_{H}^{{1+\lambda q}}}\int_{B_{\gamma}}|f(\mathbf{x})|^{q}d\mathbf{x}\bigg)^{1/q}<\infty.
\end{aligned}\end{eqnarray*}
When $\lambda=-1/q,$ then $\dot{B}^{q,\lambda}(\mathbb{Q}_p^n)=L^{q}(\mathbb{Q}_p^n).$ It is not hard to see that $\dot{B}^{q,\lambda}(\mathbb{Q}_p^n)$ is reduced to \{0\} whenever $\lambda<-1/q.$
\end{definition}
\begin{definition}\cite{WF} Let $1\leq q<\infty$ and $-1/q\leq\lambda<0.$ The $p$-adic weak central Morrey space $W\dot{B}^{q,\lambda}(\mathbb{Q}_p^n)$ is defined as
\begin{eqnarray*}\begin{aligned}W\dot{B}^{q,\lambda}(\mathbb{Q}_p^n)=\{f:\|f\|_{W\dot{B}^{q,\lambda}(\mathbb{Q}_p^n)}<\infty\},
\end{aligned}\end{eqnarray*}
where \begin{eqnarray*}\begin{aligned}\|f\|_{W\dot{B}^{q,\lambda}(\mathbb{Q}_p^n)}=\sup_{\gamma\in\mathbb{Z}}|B_{\gamma}|_{H}^{-\lambda-1/q}\|f\|_{WL^{q}(B_{\gamma})},\end{aligned}\end{eqnarray*}and $\|f\|_{WL^{q}(B_{\gamma})}$ is the local $p$-adic $L^{q}$-norm of $f(x)$ restricted to the ball $B_{\gamma}$, that is
\begin{eqnarray*}\begin{aligned}\|f\|_{WL^{q}(B_{\gamma})}=\sup_{\lambda>0}|\{\mathbf{x}\in B_{\gamma}:|f(\mathbf{x})|>\lambda\}|^{1/q}.
\end{aligned}\end{eqnarray*}
It is clear that if $\lambda=-1/q,$ then  $W\dot{B}^{q,\lambda}(\mathbb{Q}_p^n)=L^{q,\infty}(\mathbb{Q}_p^n).$ Also, $\dot{B}^{q,\lambda}(\mathbb{Q}_p^n)\subseteq W\dot{B}^{q,\lambda}(\mathbb{Q}_p^n)$ for $1\leq q<\infty$ and $-1/q<\lambda<0.$
\end{definition}

The study of $p$-adic analysis is considered important in modern ages, because, $p$-adic analysis is a natural base for development of various models of ultrametric diffusion energy landscape \cite{ABKO}. It also attracts great deal of interest towards quantum mechanics \cite{VVZ}, theoretical biology \cite{DGSK}, string theory \cite{VV}, quantum gravity \cite{ADFV,BF}, spin glass theory \cite{ABK,PS1}. In \cite{ABKO}, it was shown that the $p$-adic analysis can be efficiently applied both to relaxation in complex speed systems and processes combined with the relaxation of a complex environment. Besides, $p$-adic analysis has an imperative role in pseudo-differential equations and harmonic analysis, see for example \cite{BV,CEKMM,CH,HS}.

For a non-negative integrable function $f$ on $\mathbb{R}^{+},$ the one dimensional Hardy operator is defined as:
\begin{equation}\label{EN4}h f(x)=\frac{1}{x}\int_{0}^{x}f(y)dy,\quad   x>0,\end{equation}
Hardy, in \cite{H}, established the following integral inequality for the Hardy operator:
\begin{equation}\label{EN5}\|h f\|_{L^{q}(\mathbb{R}^{+})}\leq\frac{q}{q-1}\|f\|_{L^{q}(\mathbb{R}^{+})},\quad 1<q<\infty.\end{equation}
 Later, Faris in \cite{F} gave the following extension of one dimensional Hardy operator:
\begin{equation}\label{EN6}\mathcal{H}f(\mathbf{x})=\frac{1}{|\mathbf{x}|^{n}}\int_{|\mathbf{y}|\leq|\mathbf{x}|}f(\mathbf{y})d\mathbf{y}.\end{equation}
 In \cite{CG}, the authors computed the norm of Hardy operator on the Lebesgue space, whereas Fu et al., in \cite{FGLZ}, obtained the boundedness of $\mathcal{H}$ on power weighted Lebesgue space $L^{q}(|\mathbf{x}|^{\alpha})$ for $1<q<\infty.$ For more details about the boundedness of Hardy operator we included some references \cite{B,H1,PS,ZFL}.

On the other hand, the fractional Hardy operator is obtained by merely interchanging $|\cdot|^n$ with $|\cdot|^{n+\alpha}$ ($0\leq\alpha<n$) in  (\ref{EN6}). The weak and strong type optimal bounds for the fractional Hardy and adjoint Hardy operator has also spotlighted many researchers in the past, see for example \cite{FGLZ,GZ1,GHZ,HJ,ZL}.

 For $f\in L_{\rm{loc}}(\mathbb{Q}_{p}^{n})$ and $0\le\alpha<n,$ Wu, in (\cite{W}), defined the $p$-adic fractional Hardy operator as:
$$H^{p}_{\alpha}f(\mathbf{x})=\frac{1}{|\mathbf{x}|_{p}^{n-\alpha}}\int_{|\mathbf{y}|_{p}\leq |\mathbf{x}|_{p}}f(\mathbf{y})d\mathbf{y},  \qquad\mathbf{x}\in\mathbb{Q}_{p}^{n}\setminus\{\mathbf{0}\}.$$
If $\alpha=0,$ the fractional $p$-adic Hardy operator is reduced to $p$-adic Hardy operators, see \cite{FWL}. Some other papers showing the boundedness of $p$-adic Hardy type operator include \cite{GZ,HS1,LZ,WMF}.

In this article, we give the sharp weak endpoint estimates for $p$-adic fractional Hardy operator on $p$-adic Lebesgue space. Moreover, Sharp weak bounds for $p$-adic Hardy operator on $p$-adic central morrey space are also obtained.

\section{\textbf{Sharp weak endpoint estimates for $p$-adic fractional Hardy Operator}}

Our main result for this section read as:
\begin{theorem}\label{T2} Let $0<\alpha<n$ and $n+\gamma>0.$ If $f\in L^{1}(\mathbb Q_p^n),$ then
 \begin{eqnarray*}\begin{aligned}\|H^{p}_{\alpha}f\|_{L^{(n+\gamma)/(n-\alpha),\infty}(|\mathbf{x}|_{p}^{\gamma};\mathbb Q_p^n)}
\le&C\|f\|_{L^{1}(\mathbb Q_p^n)},
\end{aligned}\end{eqnarray*}
where the constant $$C=\bigg(\frac{1-p^{-n}}{1-p^{-(n+\gamma)}}\bigg)^{(n-\alpha)/(n+\gamma)}$$ is optimal.
\end{theorem}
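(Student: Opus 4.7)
The plan is to prove the weak-type bound and its sharpness separately, both hinging on the pointwise size estimate
\[|H^p_\alpha f(\mathbf{x})|\le|\mathbf{x}|_p^{-(n-\alpha)}\|f\|_{L^1(\mathbb{Q}_p^n)},\]
obtained by pulling $|\mathbf{x}|_p^{-(n-\alpha)}$ outside the defining integral and enlarging the domain, together with the exact geometric-series evaluation of the $|\mathbf{x}|_p^\gamma$-weighted volume of a $p$-adic ball.

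For the upper bound, the pointwise estimate forces the super-level set $E_\lambda=\{|H^p_\alpha f|>\lambda\}$ to lie in $\{|\mathbf{x}|_p<(\|f\|_{L^1}/\lambda)^{1/(n-\alpha)}\}$; since $|\cdot|_p$ only takes values in $p^{\mathbb{Z}}$, this set is exactly a ball $B_{k_0}$ with $k_0$ the largest integer satisfying $p^{k_0(n-\alpha)}<\|f\|_{L^1}/\lambda$. I would then split $B_{k_0}=\bigsqcup_{k\le k_0}S_k$ and use $|\mathbf{x}|_p^\gamma=p^{k\gamma}$ on $S_k$ together with $|S_k|=p^{kn}(1-p^{-n})$ to compute
\[w(B_{k_0})=(1-p^{-n})\sum_{k\le k_0}p^{k(n+\gamma)}=\frac{1-p^{-n}}{1-p^{-(n+\gamma)}}\,p^{k_0(n+\gamma)},\]
the series converging because $n+\gamma>0$. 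With $1/q=(n-\alpha)/(n+\gamma)$ so that $p^{k_0(n+\gamma)/q}=p^{k_0(n-\alpha)}$, combining the two estimates with $\lambda p^{k_0(n-\alpha)}<\|f\|_{L^1}$ yields $\lambda\,w(E_\lambda)^{1/q}<C\|f\|_{L^1}$ as claimed.

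For sharpness I would test with the normalized indicator $f=\chi_{B_0}$, note $\|f\|_{L^1}=1$, and compute directly
\[H^p_\alpha f(\mathbf{x})=\begin{cases}|\mathbf{x}|_p^{\alpha}, & 0<|\mathbf{x}|_p\le 1,\\ |\mathbf{x}|_p^{-(n-\alpha)}, & |\mathbf{x}|_p>1.\end{cases}\]
For $0<\lambda<1$ the super-level set is a $p$-adic annulus $B_{k_0}\setminus B_{k_1}$, with $p^{k_0}$ just below $\lambda^{-1/(n-\alpha)}$ and $p^{k_1}$ just below $\lambda^{1/\alpha}$. Choosing $\lambda=p^{-k_0(n-\alpha)-\epsilon}$ with $0<\epsilon<n-\alpha$ fixed and letting $k_0\to\infty$ forces $k_1\to-\infty$, so the inner contribution $w(B_{k_1})$ vanishes (once again by $n+\gamma>0$), and the same geometric computation then gives
\[\lambda\,w(E_\lambda)^{1/q}\longrightarrow p^{-\epsilon}\left(\frac{1-p^{-n}}{1-p^{-(n+\gamma)}}\right)^{(n-\alpha)/(n+\gamma)}.\]
Sending $\epsilon\to 0^+$ recovers exactly $C$, showing that $C$ cannot be lowered.

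The point to watch is the discrete nature of the $p$-adic valuation: the relation between $\lambda$ and the integer $k_0$ is a floor relation, and the supremum over $\lambda$ in the weak norm only saturates the geometric-sum value of $w(B_{k_0})$ in the combined limit $\lambda\uparrow p^{-k_0(n-\alpha)}$ with $k_0\to\infty$. It is precisely this discreteness that replaces a would-be Archimedean ratio such as $n/(n+\gamma)$ by the sharp $p$-adic factor $(1-p^{-n})/(1-p^{-(n+\gamma)})$, and that makes the extremizing behaviour accessible through the single static function $\chi_{B_0}$ rather than through a delicate concentrating sequence.
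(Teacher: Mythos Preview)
Your proposal is correct and follows essentially the same route as the paper: the pointwise bound $|H^p_\alpha f(\mathbf{x})|\le|\mathbf{x}|_p^{-(n-\alpha)}\|f\|_{L^1}$, the geometric-series evaluation of $\int_{B_{k_0}}|\mathbf{x}|_p^{\gamma}\,d\mathbf{x}$, and the sharpness test with $f=\chi_{B_0}$ are all exactly what the paper does. The only cosmetic difference is that the paper evaluates the weak norm of $H^p_\alpha\chi_{B_0}$ as a supremum over $0<\lambda<1$ and lets $\lambda\to 0^+$ inside a closed-form expression, whereas you discretize via $\lambda=p^{-k_0(n-\alpha)-\epsilon}$ and take $k_0\to\infty$ then $\epsilon\to 0^+$; your version is in fact slightly more careful about the floor issues that the paper glosses over when it writes sums indexed from $\log_p\lambda^{1/\alpha}+1$ to $\log_p\lambda^{1/(\alpha-n)}$.
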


\begin{proof} Since
\begin{eqnarray}\begin{aligned}[b]\label{dd}|H^{p}_{\alpha}f(\mathbf{x})|
=&\bigg|\frac{1}{|\mathbf{x}|_{p}^{n-\alpha}}\int_{|\mathbf{y}|_{p}\leq|\mathbf{x}|_{p}}f(\mathbf{y})d\mathbf{y}\bigg|\\
=&\bigg|\frac{1}{|\mathbf{x}|_{p}^{n-\alpha}}\int_{|\mathbf{y}|_{p}\leq|\mathbf{x}|_{p}}f(\mathbf{y})d\mathbf{y}\bigg|
\\\leq&|\mathbf{x}|_{p}^{-(n-\alpha)}\|f\|_{L^{1}(\mathbb Q_p^n)}.
\end{aligned}\end{eqnarray}
Let $C_1 =\|f\|_{L^{1}(\mathbb Q_p^n)},$ then
\begin{eqnarray*}\begin{aligned}\{\mathbf{x}\in\mathbb{Q}_p^n:|H^{p}_{\alpha}f(\mathbf{x})|>\lambda\}\subset\{\mathbf{x}
\in\mathbb{Q}_p^n:|\mathbf{x}|_{p}<(C_1 /\lambda)^{1/(n-\alpha)}\}.
\end{aligned}\end{eqnarray*}
Thus,
\begin{eqnarray}\begin{aligned}[b]\label{NT1}&\|H^{p}_{\alpha}f\|_{L^{(n+\gamma)/(n-\alpha),\infty}(|x|_{p}^{\gamma};\mathbb Q_p^n)}\\
\le&\sup_{\lambda>0}\lambda\bigg(\int_{\mathbb{Q}_p^n}\chi_{\{\mathbf{x}
\in\mathbb{Q}_p^n:|H^{p}_{\alpha}f(\mathbf{x})|>\lambda\}}(\mathbf{x})|\mathbf{x}|_{p}^{\gamma}d\mathbf{x}\bigg)^{(n-\alpha)/(n+\gamma)}\\
\leq&\sup_{\lambda>0}\lambda\bigg(\int_{\mathbb{Q}_p^n}\chi_{\big\{\mathbf{x}
\in\mathbb{Q}_p^n:|\mathbf{x}|_{p}<\big(C_1 /\lambda\big)^{1/(n-\alpha)}\big\}}(\mathbf{x})|\mathbf{x}|_{p}^{\gamma}d\mathbf{x}\bigg)^{(n-\alpha)/(n+\gamma)}\\
=&\sup_{\lambda>0}\lambda\bigg(\int_{|\mathbf{x}|_{p}<\big(C_1 /\lambda\big)^{1/(n-\alpha)}}|\mathbf{x}|_{p}^{\gamma}d\mathbf{x}\bigg)^{(n-\alpha)/(n+\gamma)}\\
=&\sup_{\lambda>0}\lambda\bigg(\sum_{j=-\infty}^{\log_{p}\big(C_1 /\lambda\big)^{1/(n-\alpha)}}\int_{S_{j}}|\mathbf{x}|_{p}^{\gamma}d\mathbf{x}\bigg)^{(n-\alpha)/(n+\gamma)}\\
=&(1-p^{-n})^{(n-\alpha)/(n+\gamma)}\sup_{\lambda>0}\lambda
\bigg(\sum_{j=-\infty}^{\log_{p}\big(C_1 /\lambda\big)^{1/(n-\alpha)}}p^{j(n+\gamma)}d\mathbf{x}\bigg)^{(n-\alpha)/(n+\gamma)}\\
=&\bigg(\frac{1-p^{-n}}{1-p^{-(n+\gamma)}}\bigg)^{(n-\alpha)/(n+\gamma)}\sup_{\lambda>0}\lambda
\bigg(\frac{C_1 }{\lambda}\bigg)\\
=&\bigg(\frac{1-p^{-n}}{1-p^{-(n+\gamma)}}\bigg)^{(n-\alpha)/(n+\gamma)}\|f\|_{L^{1}(|\mathbf x|_p^\beta)}.
\end{aligned}\end{eqnarray}

To show that the constant
$$\bigg(\frac{1-p^{-n}}{1-p^{-(n+\gamma)}}\bigg)^{(n-\alpha)/(n+\gamma)},$$
appeared in (\ref{NT1}) is optimal, we employ the idea of use of
power function given in \cite{X}, hence, we let
\begin{eqnarray*}\begin{aligned}f_{0}(\mathbf{x})=\chi_{\{\mathbf{x}\in\mathbb Q_p^n:|\mathbf{x}|_{p}\leq1\}}(\mathbf{x}),
\end{aligned}\end{eqnarray*}
then
\begin{eqnarray*}\begin{aligned}[b]\|f_{0}\|_{L^{1}(\mathbb{Q}_p^n)}=1.
\end{aligned}\end{eqnarray*}
Also,
\begin{eqnarray*}\begin{aligned}H_\alpha^{p}f_{0}(\mathbf{x})=&\frac{1}{|\mathbf{x}|_{p}^{n-\alpha}}\int_{|\mathbf{y}|_{p}\le|\mathbf{x}|_{p}}f_{0}(\mathbf{y})d\mathbf{y}\\
=&\frac{1}{|\mathbf{x}|_{p}^{n-\alpha}}\int_{|\mathbf{y}|_{p}\le|\mathbf{x}|_{p}}\chi_{\{\mathbf{x}\in\mathbb Q_p^n:|\mathbf{y}|_{p}\leq1\}}(\mathbf{y})d\mathbf{y}\\
=& \frac{1}{|\mathbf{x}|_{p}^{n-\alpha}}\begin{cases}
      \int_{|\mathbf{y}|_{p}\le|\mathbf{x}|_{p}} d\mathbf{y},& |\mathbf{x}|_{p}\leq 1; \\
      \int_{|\mathbf{y}|_{p}\le1} d\mathbf{y}, & |\mathbf{x}|_{p}> 1.
   \end{cases}
\end{aligned}\end{eqnarray*}
Since $|B_{\log_p |\mathbf x|_p}|_H=|\mathbf x|_p^n|B_0|_H,$ therefore,
\begin{eqnarray*}\begin{aligned}H^{p}_{\alpha}f_{0}(\mathbf{x})= \begin{cases}
      |\mathbf{x}|_{p}^{\alpha},& |\mathbf{x}|_{p}\leq 1; \\
      |\mathbf{x}|_{p}^{\alpha-n}, & |\mathbf{x}|_{p}> 1.
   \end{cases}
\end{aligned}\end{eqnarray*}

Now,
\begin{eqnarray*}\begin{aligned}\{\mathbf{x}\in\mathbb{Q}_p^n:|H^{p}_{\alpha}f_{0}(\mathbf{x})|>\lambda\}=&\{|\mathbf{x}|_{p}\leq1:|\mathbf{x}|_{p}^{\alpha} >\lambda\}\cup\{|\mathbf{x}|_{p}>1:|\mathbf{x}|_{p}^{\alpha-n}>\lambda\}.
\end{aligned}\end{eqnarray*}

Since $0<\alpha<n,$ therefore, when $\lambda\geq1,$ then
\begin{eqnarray*}\begin{aligned}\{\mathbf{x}\in\mathbb{Q}_p^n:|H^{p}_{\alpha}f_{0}(\mathbf{x})|>\lambda\}=\emptyset,
\end{aligned}\end{eqnarray*}
and when $0<\lambda<1,$ then
\begin{eqnarray*}\begin{aligned}\{\mathbf{x}\in\mathbb{Q}_p^n:|H^{p}_{\alpha}f_{0}(\mathbf{x})|>\lambda\}=\{\mathbf{x}\in\mathbb{Q}_p^n:(\lambda)^{1/n}<|\mathbf{x}|_{p}<(1/\lambda)^{1/n-\alpha}\}.
\end{aligned}\end{eqnarray*}
Ultimately we are down to:
\begin{eqnarray}\begin{aligned}[b]\label{n1}&\|\mathcal{H}^{p}_{\alpha}f_{0}\|_{L^{(n+\gamma)/(n-\alpha))),\infty}(|\mathbf{x}|_{p}^{\gamma};\mathbb Q_p^n)}\\=&\sup_{0<\lambda<1}\lambda
\bigg(\int_{\mathbb{Q}_p^n}\chi_{\{\mathbf{x}\in\mathbb{Q}_p^n:(\lambda)^{1/\alpha}<|\mathbf{x}|_{p}<(1/\lambda)^{1/(n-\alpha)}\}}
(\mathbf{x})|\mathbf{x}|_{p}^{\gamma}d\mathbf{x}\bigg)^{(n-\alpha)/(n+\gamma)}\\
=&\sup_{0<\lambda<1}\lambda\bigg(\int_{(\lambda)^{1/\alpha}<|\mathbf{x}|_{p}<(1/\lambda)^{1/(n-\alpha)}}|\mathbf{x}|_{p}^{\gamma}d\mathbf{x}\bigg)^{(n-\alpha)/(n+\gamma)}\\
=&(1-p^{-n})^{(n-\alpha)/(n+\gamma)}\sup_{0<\lambda<1}\lambda\bigg(\sum_{j=\log_{p}\lambda^{1/\alpha}+1}^{\log_{p}\lambda^{1/(\alpha-n)}}p^{j(n+\gamma)}\bigg)^{(n-\alpha)/(n+\gamma)}\\
=&(1-p^{-n})^{(n-\alpha)/(n+\gamma)}\sup_{0<\lambda<1}\lambda\bigg(\frac{p^{(\log_{p}\lambda^{1/\alpha}+1)
(n+\gamma)}-p^{(\log_{p}\lambda^{1/(\alpha-n)}+1)(n+\gamma)}}{1-p^{(n+\gamma)}}\bigg)^{(n-\alpha)/(n+\gamma)}\\
=&(1-p^{-n})^{(n-\alpha)/(n+\gamma)}\sup_{0<\lambda<1}\lambda\bigg(\frac{{\lambda^{(n+\gamma)/\alpha}}
-{\lambda^{(n+\gamma)/(\alpha-n)}}}{p^{-(n+\gamma)}-1}\bigg)^{(n-\alpha)/(n+\gamma)}\\
=&(1-p^{-n})^{(n-\alpha)/(n+\gamma)}\sup_{0<\lambda<1}\bigg(\frac{1-{\lambda^{(n+\gamma)/\alpha}}{\lambda^{(n+\gamma)/(n-\alpha)}}}{1-p^{-(n+\gamma)}}\bigg)^{(n-\alpha)/(n+\gamma)}\\
=&\bigg(\frac{1-p^{-n}}{1-p^{-(n+\gamma)}}\bigg)^{(n-\alpha)/(n+\gamma)}\sup_{0<\lambda<1}
\bigg(1-{\lambda^{(n+\gamma)/\alpha}}{\lambda^{(n+\gamma)/(n-\alpha)}}\bigg)^{(n-\alpha)/(n+\gamma)}\\
=&\bigg(\frac{1-p^{-n}}{1-p^{-(n+\gamma)}}\bigg)^{(n-\alpha)/(n+\gamma)}\\
=&\bigg(\frac{1-p^{-n}}{1-p^{-(n+\gamma)}}\bigg)^{(n-\alpha)/(n+\gamma)}\|f_0\|_{L^{1}(\mathbb Q_p^n)}.
\end{aligned}\end{eqnarray}
We thus conclude from (\ref{NT1}) and (\ref{n1}) that
\begin{eqnarray*}\begin{aligned}\|{H}^{p}_{\alpha}\|_{{L^{1}(\mathbb Q_p^n)\rightarrow L^{(n+\gamma)/(n-\alpha),\infty}(|\mathbf{x}|_{p}^{\gamma};\mathbb Q_p^n)}}
=\bigg(\frac{1-p^{-n}}{1-p^{-(n+\gamma)}}\bigg)^{1/q}.
\end{aligned}\end{eqnarray*}
\end{proof}

\section{\textbf{Sharp bound for $p$-adic Hardy operator}}
The present section investigates the boundedness of $p$-adic Hardy operator on $p$-adic weak central Morrey spaces. It is shown the constant obtained in this case is also optimal.
\begin{theorem}\label{T1} Let $1\leq q<\infty,$ $-1/q\leq\lambda<0$ and if $f\in \dot{B}^{q,\lambda}(\mathbb{Q}_p^n),$ then
\begin{eqnarray*}\begin{aligned}\|Hf\|_{W\dot{B}^{q,\lambda}(\mathbb{Q}_p^n)}\le\|f\|_{\dot{B}^{q,\lambda}(\mathbb{Q}_p^n)},
\end{aligned}\end{eqnarray*}
and the constant $1$ is optimal.
\end{theorem}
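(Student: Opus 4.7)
The plan is to follow the same two-stage template as in Theorem~\ref{T2}: produce a pointwise bound on $Hf$ in terms of $\|f\|_{\dot{B}^{q,\lambda}(\mathbb{Q}_p^n)}$, then show both factors in that estimate are simultaneously saturated by a concrete test function. For the upper bound I would fix $\mathbf{x}\neq\mathbf{0}$, set $k=\log_p|\mathbf{x}|_p$ so that $\{|\mathbf{y}|_p\le|\mathbf{x}|_p\}=B_k$, and apply H\"older's inequality with conjugate exponents $q$ and $q'$ followed directly by the definition of the central Morrey norm:
\[
|Hf(\mathbf{x})|\le |\mathbf{x}|_p^{-n}|B_k|^{1/q'}\|f\|_{L^q(B_k)}\le |\mathbf{x}|_p^{-n}|B_k|^{1/q'+\lambda+1/q}\|f\|_{\dot{B}^{q,\lambda}(\mathbb{Q}_p^n)}=|\mathbf{x}|_p^{n\lambda}\|f\|_{\dot{B}^{q,\lambda}(\mathbb{Q}_p^n)}.
\]
Thus the upper bound reduces to showing that the majorant $g(\mathbf{x}):=|\mathbf{x}|_p^{n\lambda}$ satisfies $\|g\|_{W\dot{B}^{q,\lambda}(\mathbb{Q}_p^n)}=1$.

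The computation of $\|g\|_{W\dot{B}^{q,\lambda}(\mathbb{Q}_p^n)}$ exploits the fact that $g$ is constant on each sphere $S_j$ with value $p^{jn\lambda}$, so the distribution function $\mu\mapsto|\{g>\mu\}\cap B_\gamma|$ is a step function whose jumps occur at $\mu=p^{jn\lambda}$. Taking sup-limits from below and using $\lambda+1/q\ge 0$ to resolve the maximization, the extremal choice corresponds to $\mu\uparrow p^{\gamma n\lambda}$, for which $\{g>\mu\}\cap B_\gamma=B_\gamma$. This yields $\|g\|_{WL^q(B_\gamma)}=|B_\gamma|^{\lambda+1/q}$ for every $\gamma$; multiplying by $|B_\gamma|^{-\lambda-1/q}$ cancels the $\gamma$-dependence and gives $\|g\|_{W\dot{B}^{q,\lambda}(\mathbb{Q}_p^n)}=1$, which combined with the pointwise estimate proves the inequality of the theorem.

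For the sharpness of the constant $1$, I would test against $f_0=\chi_{B_0}$. Since $B_0\cap B_\gamma$ equals $B_\gamma$ when $\gamma\le 0$ and $B_0$ when $\gamma\ge 0$, the hypotheses $\lambda+1/q\ge 0$ and $\lambda<0$ together force the supremum in the Morrey norm to be attained at $\gamma=0$, giving $\|f_0\|_{\dot{B}^{q,\lambda}(\mathbb{Q}_p^n)}=1$. A direct computation yields $Hf_0(\mathbf{x})=\min\{1,|\mathbf{x}|_p^{-n}\}$, which is identically $1$ on $B_0$; therefore restricting the defining supremum of $\|Hf_0\|_{W\dot{B}^{q,\lambda}(\mathbb{Q}_p^n)}$ to $\gamma=0$ already produces $\|Hf_0\|_{W\dot{B}^{q,\lambda}(\mathbb{Q}_p^n)}\ge 1$, and combined with the upper bound this forces equality.

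The only delicate step is the weak-norm evaluation in the second paragraph: one has to track carefully whether the $p$-adic level set $\{g>\mu\}$ already sits inside $B_\gamma$ or is clipped by it, and verify that both regimes produce the same extremal value $|B_\gamma|^{\lambda+1/q}$, which in the end is exactly what the hypothesis $-1/q\le\lambda<0$ is engineered to guarantee.
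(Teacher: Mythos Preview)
Your proposal is correct and follows essentially the same route as the paper: the identical H\"older pointwise bound $|Hf(\mathbf{x})|\le |\mathbf{x}|_p^{n\lambda}\|f\|_{\dot B^{q,\lambda}}$, the same two-regime analysis of the resulting weak level sets (the paper writes it as a split on $\gamma\lessgtr\log_p(t/C_2)^{1/(n\lambda)}$ rather than as computing $\|\,|\cdot|_p^{n\lambda}\|_{W\dot B^{q,\lambda}}$, but the content is identical), and the same extremizer $f_0=\chi_{B_0}$. Your sharpness step is in fact a little cleaner than the paper's, since you only need the single value $\gamma=0$ to get the lower bound $\ge 1$, whereas the paper carries out the full case analysis for $\|Hf_0\|_{W\dot B^{q,\lambda}}$.
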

\begin{proof}Applying H\"{o}lder's inequality:
\begin{eqnarray*}\begin{aligned}|H^{p}f(\mathbf{x})|\leq&\frac{1}{|\mathbf{x}|_{p}^{n}}\bigg(\int_{B(0,|\mathbf{x}|_{p})}|f(\mathbf{y})|^{q}d\mathbf{y}\bigg)^{1/q}
\bigg(\int_{B(0,|\mathbf{x}|_{p})}d\mathbf{y}\bigg)^{1/q'}\\
=&|\mathbf{x}|_{p}^{n\lambda}\|f\|_{\dot{B}^{q,\lambda}(\mathbb{Q}_p^n)}.
\end{aligned}\end{eqnarray*}
Let $C_2=\|f\|_{\dot{B}^{q,\lambda}(\mathbb{Q}_p^n)}.$ Since $\lambda<0,$ then
\begin{eqnarray*}\begin{aligned}\|Hf\|_{W\dot{B}^{q,\lambda}(\mathbb{Q}_p^n)}\leq&\sup_{\gamma\in\mathbb{Z}}\sup_{t>0}t|B_{\gamma}|_{H}^{-\lambda-1/q}\big|\{\mathbf{x}\in B_{\gamma}:C_2|\mathbf{x}|_{p}^{n\lambda}>t\}\big|^{1/q}\\
=&\sup_{\gamma\in\mathbb{Z}}\sup_{t>0}t |B_{\gamma}|_{H}^{-\lambda-1/q}\big|\{|\mathbf{x}|_{p}\leq p^{\gamma}: |\mathbf{x}|_{p}<(t/C_2)^{1/n\lambda}\}\big|^{1/q}.
\end{aligned}\end{eqnarray*}
If $\gamma\leq\log_{p}(t/C_2)^{1/n\lambda},$ then for $\lambda<0,$ we obtain
\begin{eqnarray*}\begin{aligned}&\sup_{t>0}\sup_{\gamma\leq\log_{p}(t/C_2)^{1/n\lambda}}t |B_{\gamma}|_{H}^{-\lambda-1/q}\big|\{|\mathbf{x}|_{p}\leq p^{\gamma}: |\mathbf{x}|_{p}<(t/C_2)^{1/n\lambda}\}\big|^{1/q}\\
&\leq\sup_{t>0}\sup_{\gamma\leq\log_{p}(t/C_2)^{1/n\lambda}}t p^{-\gamma n\lambda}\\&=C_2\\&=\|f\|_{\dot{B}^{q,\lambda}(\mathbb{Q}_p^n)}.
\end{aligned}\end{eqnarray*}
If $\gamma>\log_{p}(t/C_2)^{1/n\lambda},$ then for $\lambda+1/q>0,$ we get
\begin{eqnarray*}\begin{aligned}&\sup_{t>0}\sup_{\gamma>\log_{p}(t/C_2)^{1/n\lambda}}t |B_{\gamma}|_{H}^{-\lambda-1/q}|\{|\mathbf{x}|_{p}\leq p^{\gamma}: |\mathbf{x}|_{p}<(t/C_2)^{1/n\lambda}\}|^{1/q}\\
&\leq\sup_{t>0}\sup_{\gamma>\log_{p}(t/C_2)^{1/n\lambda}}t p^{-\gamma n(\lambda+1/q)}(t/C_2)^{1/q\lambda}\\&=C_2\\&=\|f\|_{\dot{B}^{q,\lambda}(\mathbb{Q}_p^n)}.
\end{aligned}\end{eqnarray*}
Therefore,
\begin{eqnarray}\begin{aligned}\label{1}\|Hf\|_{W\dot{B}^{q,\lambda}(\mathbb{Q}_p^n)}\leq\|f\|_{\dot{B}^{q,\lambda}(\mathbb{Q}_p^n)}.
\end{aligned}\end{eqnarray}

Conversely, to prove that the constant $1$ is optimal, we let
 $$f_{0}(\mathbf{x})=\chi_{\{|\mathbf{x}|_{p}\leq1\}}(\mathbf{x}),$$\\
then, $$\|f_0\|_{\dot{B}^{q,\lambda}(\mathbb{Q}_p^n)}=\sup_{\gamma\in\mathbb{Z}}\bigg(\frac{1}{|B_{\gamma}|_{H}^{{1+\lambda q}}}\int_{B_{\gamma}}\chi_{\{|\mathbf{x}|_{p}\leq1\}}(\mathbf{x})d\mathbf{x}\bigg)^{1/q}.$$
If $\gamma<0,$ then
\begin{eqnarray*}\begin{aligned}\sup_{\substack{\gamma\in\mathbb{Z} \\ \gamma<0}}\bigg(\frac{1}{|B_{\gamma}|_{H}^{{1+\lambda q}}}\int_{B_{\gamma}}d\mathbf{x}\bigg)^{1/q}
=\sup_{\substack{\gamma\in\mathbb{Z} \\ \gamma<0}}p^{-n\gamma\lambda }=1,\end{aligned}\end{eqnarray*}
since $\lambda<0.$
If $\gamma\ge0,$ then using the condition that $\lambda+1/q>0,$ we have
\begin{eqnarray*}\begin{aligned}\sup_{\substack{\gamma\in\mathbb{Z} \\ \gamma\ge0}}\bigg(\frac{1}{|B_{\gamma}|_{H}^{{1+\lambda q}}}\int_{B_0}d\mathbf{x}\bigg)^{1/q}=\sup_{\substack{\gamma\in\mathbb{Z} \\ \gamma\ge0}}p^{-n\gamma(\lambda+1/q) }=1.\end{aligned}\end{eqnarray*}
Therefore,
$$\|f_0\|_{\dot{B}^{q,\lambda}(\mathbb{Q}_p^n)}=1.$$

Moreover,\begin{eqnarray*}\begin{aligned}H^{p}f_{0}(\mathbf{x})= \begin{cases}
      1,& |\mathbf{x}|_{p}\leq 1; \\
      |\mathbf{x}|_{p}^{-n}, & |\mathbf{x}|_{p}> 1,
   \end{cases}
\end{aligned}\end{eqnarray*}
which implies that $|H^pf_0(\mathbf x)|\le1.$

Next, in order to construct weak central Morrey norm we divide our analysis into following two cases:\\
Case 1. When $\gamma\leq0,$ then
\begin{eqnarray*}\begin{aligned}\|Hf_{0}\|_{WL^{q}(B_{\gamma})}=\sup_{0<t\leq1}t|\{\mathbf{x}\in B_{\gamma}:1>t\}|^{1/q}=p^{n\gamma/q},
\end{aligned}\end{eqnarray*}
and
\begin{eqnarray*}\begin{aligned}\|Hf_{0}\|_{W\dot{B}^{q,\lambda}(\mathbb{Q}_p^n)}=\sup_{\gamma<0}|B_{\gamma}|_{H}^{-\lambda-1/q}\|f\|_{WL^{q}(B_{\gamma})}
=\sup_{\gamma<0}p^{-n\gamma\lambda}=1=\|f_0\|_{\dot{B}^{q,\lambda}(\mathbb{Q}_p^n)}.
\end{aligned}\end{eqnarray*}
\\
Case 2. When $\gamma>0,$ we have
\begin{eqnarray*}\begin{aligned}\|Hf_{0}\|_{WL^{q}(B_{\gamma})}=&\sup_{0<t\leq1}t|\{\mathbf{x}\in B_{\mathbf{0}}:1>t\}\cup\{1\leq|\mathbf{x}|_{p}<p^{\gamma}:|\mathbf{x}|_{p}^{-n}>t\}|^{1/q}.
\end{aligned}\end{eqnarray*}
For further analysis, this case is further divided into the following subcases:\\
Case 2(a). If $1<\gamma<\log_{p}t^{-1/n},$ then
\begin{eqnarray*}\begin{aligned}\|Hf_{0}\|_{WL^{q}(B_{\gamma})}=\sup_{0<t\leq1}t\{1+p^{n\gamma}-1\}^{1/q}
=\sup_{0<t\leq1}tp^{n\gamma/q}.
\end{aligned}\end{eqnarray*}
Case 2(b). If $1<\log_{p}t^{-1/n}<\gamma,$ then:\begin{eqnarray*}\begin{aligned}\|Hf_{0}\|_{WL^{q}(B_{\gamma})}=&\sup_{0<t\leq1}t(1+t^{-1}-1)^{1/q}=&\sup_{0<t\leq1}t^{1-1/q}.
\end{aligned}\end{eqnarray*}
Now, for $1\leq q<\infty$ and $-1/q\leq\lambda<0,$ from case 2(a) and 2(b), we obtain
\begin{eqnarray}\begin{aligned}[b]\label{2}&\|Hf_{0}\|_{W\dot{B}^{q,\lambda}(\mathbb{Q}_p^n)}\\
&=\max\bigg\{\sup_{0<t\leq1}\sup_{1<\gamma\leq\log_{p}(1/t)^{1/n}}tp^{-n\gamma\lambda},\sup_{0<t\leq1}\sup_{1<\log_{p}(1/t)^{1/n}<\gamma} t^{1-1/q}p^{-n\gamma(\lambda+1/q)}\bigg\}\\
&=\max\bigg\{\sup_{0<t\leq1}t^{1+\lambda},\sup_{0<t\leq1}t^{1+\lambda}\bigg\}
\\&=1=\|f_0\|_{\dot{B}^{q,\lambda}(\mathbb{Q}_p^n)}.
\end{aligned}\end{eqnarray}
Finally, using (\ref{1}) and (\ref{2}), we arrive at:
\begin{eqnarray*}\begin{aligned}\|H\|_{\dot{B}^{q,\lambda}(\mathbb{Q}_p^n)\rightarrow W\dot{B}^{q,\lambda}(\mathbb{Q}_p^n)}=1.
\end{aligned}\end{eqnarray*}
\end{proof}

\noindent\textbf{Conflict of Interest:} The authors declare that they have no conflict of interest.\\

\noindent\textbf{Data Availability Statement:} No data were used to support this study.\\

\noindent\textbf{Funding Statement:} This research is partially supported by Higher Education Commission (HEC) NRPU Programme 2017-18 [7098/Federal/ NRPU/R\&D/HEC/ 2017] and the Quaid-I-Azam University Research Fund [URF-2019].


\begin{thebibliography}{99}
 \bibitem{ADFV}    I.Y.  Arefeva, B. Dragovich, P. Frampton and I.V. Volovich, The wave function of the universe and $p$-adic gravity, Mod. Phys. Lett. A, {\bf 6}  (1991), 4341--4358.
\bibitem{ABK}    V.A.  Avetisov, A.H. Bikulov and S.V. Kozyrev, Application of $p$-adic analysis to models of sponteneous breaking of replica symmetry, J. Phys. A: Math. Gen. {\bf 32(50)} (1999), 8785-8791.
\bibitem{ABKO}    V.A.  Avetisov, A.H. Bikulov, S.V. Kozyrev and V.A. Osipov, $p$-adic models of ultrametric diffusion constrained by hierarchical energy landscaapes, J. Phys. A:Math.Gen. {\bf 35} (2002), 177--189.
\bibitem{BV}    R.A. Bandaliyev, S.S. Volosivets, Hausdorff operator on weighted Lebesgue and grand Lebesgue p-adic spaces, $p$-adic Numb. Ultrametric Anal. Appl. {\bf11(2)} (2019), 114--122.
\bibitem{B}    A.G.  Bliss, An integral inequality, J. Lond. Math. Soc. {\bf 5} (1930), 40--46.
\bibitem{BF}    L. Brekke and P.G.O. Frued, $p$-adic numbers in Physics, Phys. Rep. {\bf 233}  (1993), 1--66.
\bibitem{CG}    M. Christ, L. Grafakos, Best Constants for two non convolution inequalities, Proc. Amer. Math. Soc., {\bf123} (1995), 1687--1693.
\bibitem{CEKMM}      N.M. Chuong, Y.V. Egorov, A.Y. Khrennihov, Y.Meyer and D. Mumford, Harmonic, Wavelet and $p$-adic Analysis, World Scientific, 2007.
\bibitem{CH}      N.M. Chuong and H.D. Hung, Maximal functions and weighted norm inequalities on local fields, Appl. Comput. Harmon. Anal.,{\bf29} (2010), 272--286.
\bibitem{DGSK}      D. Dubischar, V.M. Gundlach, O. Steinkamp and A. Khrennikov, A $p$-adic model for the process of thinking disturbed by physiological and information noise, J. Theor. Biol. {\bf197} (1999), 451--467.
\bibitem{F}     W.G. Faris, Weak Lebesgue spaces and quantum mechanical binding, Duke Math. J. {\bf{43}} (1976), 365--373.
\bibitem{FGLZ}  Z.W. Fu, L. Grafakos, S.Z. Lu and F.Y. Zhao, Sharp bounds for $m$-linear Hardy and Hilbert Operators, Houston. J. Math. {\bf 38(1)} (2012), 225--244.
\bibitem{FWL}   Z.W. Fu, Q.Y. Wu and S.Z. Lu, Sharp estimates of $p$-adic Hardy and Hardy-Littlewood-P\'{o}lya Operators, Acta Math. Sinica {\bf{29}} (2013) 137--150.
\bibitem{GZ1}   G. Gao and F.Y. Zhao, Sharp weak bounds for Hausdorff operators, Anal Math, {\bf{41(3)}} (2015), 163--173.
\bibitem{GHZ}   G. Gao, X. Hu and C. Zhong, Sharp weak estimates for Hardy-type Operators, Ann. Funct. Anal. {\bf 7(3)} (2016), 421--433.
\bibitem{GZ}    G. Gao and Y. Zhong, Some estimates of Hardy Operators and their commutators on Morrey-Herz spaces, J. Math. Inequal. {\bf 11(1)} (2017), 49--58.
\bibitem{H}     G.H. Hardy, Note on a theorem of Hilbert, Math. Z., {\bf 6} (1920), 314--317.
\bibitem{H1}    K.P. Ho, Hardy's inequality on Hardy–Morrey spaces, Georg. Math. J., {\bf26(3)} (2019), 405--413.
\bibitem{HS}    A. Hussain , N. Sarfraz, The Hausdorff operator on weighted  $p$-adic Morrey and Herz type spaces, $p$-adic Numb. Ultrametric Anal. Appl. {\bf11(2)} (2019), 151--162.
\bibitem{HS1}   A. Hussain , N. Sarfraz, Optimal weak type estimates for $p$-adic Hardy operators, $p$-adic Numb. Ultrametric Anal. Appl. {\bf12(1)} 2020, 12--21.
\bibitem{LZ}    R.H. Liu and J. Zhou, Sharp estimates for the $p$-adic Hardy type Operator on higher-dimensional product spaces, J. Inequal. Appl.{\bf{2017}} (2017) 13PP.
\bibitem{PS1}    G. Parisi and N. Sourlas, $p$-adic numbers and replica symmetry, Eur. J. Phys. B  {\bf 14} (2000), 535--542.
\bibitem{PS}    L.-E. Persson and S.G. Samko, A note on the best constants in some hardy inequalities, J. Math. Inequal. {\bf 9 (2)} (2015), 437–-447.
\bibitem{VVZ}   V.S. Vladimirov, I.V. Volovich and E.I.Zelenov, $p$-adic Analysis and Mathematical Physics, World Scientific, Singapore, 1994.
\bibitem{VV}   V.S. Vladimirov and I.V. Volovich, $p$-adic quantum mechanics, Commun. Math. Phy.,{\bf 123} (1989), 659--676.
\bibitem{S1}     S.S. Volosivets, Weak and strong estimates for rough Hausdorff type operator defined on $p$-adic linear space, $p$-Adic Numb. Ultrametric Anal. Appl. {\bf9(3)} (2017), 222--230.
\bibitem{W}     Q.Y. Wu, Boundedness for Commutators of fractional $p$-adic Hardy Operator, J. Inequal. Appl. {\bf{2012}} (2012) 12pp.
\bibitem{WMF}   Q.Y. Wu, L. Mi and Z.W. Fu, Boundedness of $p$-adic Hardy Operators and their commutators on $p$-adic central Morrey and BMO spaces, J. Funct. Spaaces Appl.  {\bf{2013}} (2013), Art. ID 359193, 10pp.
\bibitem{WF}    Q.Y. Wu and Z.W. Fu, Hardy-Littlewood-Sobolev Inequalities on $p$-adic Central Morrey Spaces, Journal of Function Spaces Volume 2015, Article ID 419532, 7 pages
http://dx.doi.org/10.1155/2015/419532.
\bibitem{X}     J. Xiao, $L^p$ and $BMO$ bounds of weighted Hardy-Littlewood Averages, J. Math. Anal. Appl. {\bf262} (2001), 660--666.
\bibitem{HJ}    H. Yu and J. Li, Sharp weak estimates for $n$-dimensional fractional Hardy Operators, Front. Math. China {\bf 13(2)} (2018), 449--457.
\bibitem{ZL}    F.Y. Zhao and S.Z. Lu, The best bound for $n$-dimensional fractional Hardy Operator, Math. Inequal Appl, {\bf 18(1)} (2015), 233--240.
\bibitem{ZFL}   F.Y. Zhao, Z.W. Fu and S.Z. Lu, Endpoint estimates for n-dimensional Hardy operators and their commutators, Sci. China Math. {\bf 55(10)} (2012), 1977–-1990.

\end{thebibliography}
\end{document}